\documentclass[10pt]{article}

\usepackage[top=3cm,bottom=3cm,left=4cm,right=4cm]{geometry}
\usepackage{amsmath,amscd,amssymb,amsthm}
\usepackage[english]{babel}
\usepackage{mathtools}
\usepackage{enumerate}
\usepackage[numbers]{natbib}
\usepackage[hidelinks]{hyperref}

\title{The Density of Shifted and Affine Eisenstein Polynomials}
\author{Giacomo Micheli and Reto Schnyder}

\DeclarePairedDelimiter{\abs}{\lvert}{\rvert}
\DeclarePairedDelimiter{\ceil}{\lceil}{\rceil}

\DeclareMathOperator{\disc}{Disc}
\DeclareMathOperator{\GL}{GL}

\def\vF{\mathbb{F}}

\def\vZ{\mathbb{Z}}

\def\vR{\mathbb{R}}
\def\vQ{\mathbb{Q}}

\newtheorem{theorem}{Theorem}
\newtheorem{question}{Question}
\newtheorem{lemma}[theorem]{Lemma}

\newtheorem{proposition}[theorem]{Proposition}
\theoremstyle{definition}
\newtheorem{definition}[theorem]{Definition}

\newtheorem{remark}[theorem]{Remark}

\begin{document}

\maketitle

\begin{abstract}
In this paper we provide a complete answer to a question by Heyman and
Shparlinski concerning the natural density of polynomials which are irreducible
by Eisenstein's criterion after applying some shift.
The main tool we use is a local to global principle for density computations
over a free $\vZ$-module of finite rank.
\end{abstract}

\section{Introduction}
\label{sec:introduction}

Let $\vZ$ be the ring of rational integers.
The Eisenstein irreducibility criterion~\cite{bib:eisenstein,bib:schoenemann} is
a very convenient tool to establish that a polynomial in $\vZ[x]$ is
irreducible.
It is a well understood fact that the density of irreducible polynomials of
fixed degree $d$ among all the polynomials of degree $d$ is equal to one.
The question which naturally arises is the following:
\begin{question}\label{qu:normal_Eis}
	What is the density of polynomials which are irreducible by the Eisenstein
	criterion?
\end{question}
More informally, how likely is it that checking whether a random polynomial is
irreducible using only the Eisenstein irreducibility criterion leads to success?
In \citep{bib:PolyDub,bib:shparlinskiEisen} the authors deal with Eisenstein
polynomials of fixed degree with coefficients over $\vZ$.
They provide a complete answer to the above question in the case of monic
(See~\citep{bib:PolyDub}, \cite[Theorem~1]{bib:shparlinskiEisen}) and non-monic
(See~\cite[Theorem~2]{bib:shparlinskiEisen}) Eisenstein polynomials.

From now on we will specialize to the case of non-monic Eisenstein polynomials,
since the proofs and methods can be easily adapted from one case to the other.
In~\cite{bib:shparlinskiEisen}, the authors consider the set of polynomials of
degree at most $d$ having integer coefficients bounded in absolute value by $B$
(the \emph{height} of a polynomial) and give a sharp estimate for the number
$\rho(B)$ of polynomials which are irreducible by the Eisenstein criterion.
The \emph{natural density} of Eisenstein polynomials is then the limit of the
sequence $\rho(B)/(2B)^{d+1}$, which fully answers Question~\ref{qu:normal_Eis}.

As is well known, a polynomial $f(x)$ is irreducible if and only if $f(x+i)$ is
irreducible for all $i\in \vZ$.
Using this simple observation, one could check irreducibility by trying to use
the Eisenstein criterion for many $i$.
How likely is it that this procedure works?
More formally,
\begin{question}\label{qu:shiftedquestion}
	What is the natural density of polynomials $f(x)$ for which $f(x+i)$ is
	irreducible by the Eisenstein criterion for some integer shift $i$?
\end{question}

In~\citep{bib:heyman2014shifted}, Heyman and Shparlinski address this question,
giving a lower bound on this density.
Nevertheless, the question regarding the exact density remained open.
In this paper, we provide a complete solution to
Question~\ref{qu:shiftedquestion} using a local to global principle for
densities~\citep[Lemma~20]{bib:poonenAnn}.
Using similar methods, we also provide a solution to the question appearing in
\cite[Section~7]{bib:heyman2014shifted} about \emph{affine} Eisenstein
polynomials.

Our proofs are also supported by Monte Carlo experiments which we provide in
Section~\ref{sec:simulations}.

\section{Notation}
\label{sec:notation}
I this section we fix the notation that will be used throughout the paper.
\begin{definition}
	Let $R$ be an integral domain and $R[x]$ be the ring of polynomials with
	coefficients in $R$.
	We say that $f(x)=\sum^n_{i=0} \alpha_i x^i\in R[x]$ of degree $n$ is
	\emph{Eisenstein with respect to a prime ideal $p$} or \emph{$p$-Eisenstein}
	if
	\begin{itemize}
		\item $\alpha_n\notin p$.
		\item $\alpha_i\in p$ for all $i\in \{0,\dots,n-1\}$.
		\item $\alpha_0\notin p^2$.
	\end{itemize}
	We say that $f(x)$ is \emph{Eisenstein} if it is Eisenstein with respect to
	some prime ideal $p$.
\end{definition}

In this paper, we will only consider the ring of integers $R = \vZ$ and
the rings of $p$-adic integers $R = \vZ_p$.

\begin{definition}
	For any subset $A\subseteq \vZ^d$, we define
	\begin{align*}
		\overline{\rho}(A) &:=
			\limsup_{B \to \infty}\frac{\abs[\big]{A \cap [-B,B[^d}}{(2B)^d}, \\
		\underline{\rho}(A) &:=
			\liminf_{B \to \infty}\frac{\abs[\big]{A \cap [-B,B[^d}}{(2B)^d}. \\
		\intertext{If these coincide, we denote their value by $\rho(A)$ and
			call it the \emph{natural density} of $A$:}
		\rho(A) &:= \smashoperator[r]{\lim_{B \to \infty}}
			\frac{\abs[\big]{A \cap [-B,B[^d}}{(2B)^d}.
	\end{align*}
\end{definition}

In what follows, we will identify the module $R[x]_{\le n}$ of polynomials of
degree at most $n$ with $R^{n+1}$ by the standard basis $\{1, x, \ldots, x^n\}$.

\begin{definition}
	Let $E\subseteq \vZ^{n+1}$ be the set of degree $n$ Eisenstein polynomials
	over the integers.
	Let $E_p$ be the set of degree $n$ Eisenstein polynomials over $\vZ_p$.
\end{definition}

The reader should notice that we are computing the density of shifted (or
affine) Eisenstein polynomials of degree \emph{exactly} $n$ among polynomials
of degree \emph{at most} $n$.
Nevertheless it is easy to see that the density of shifted (and also affine, see
Remark~\ref{rem:eislowdeg}) Eisenstein polynomials of degree less or equal than
$n$ is the same.

\section{Shifted Eisenstein Polynomials}
\label{sec:shifted}

In this section, we determine the density of polynomials $f(x) \in \vZ[x]$ such
that $f(x+i)$ is Eisenstein for some shift $i \in \vZ$.
For this, let $\sigma$ be the linear map defined by
\begin{align*}
	\sigma\colon \vZ^{n+1} &\longrightarrow \vZ^{n+1} \\
	f(x) &\longmapsto f(x+1).
\end{align*}
It is easy to see that $\sigma$ has determinant one.
Similarly, we get a determinant one map over $\vZ_p$ for any $p$, which we will
also denote by $\sigma$.

\begin{definition}
	Let $\overline E\subseteq \vZ^{n+1}$ be the set of degree $n$ polynomials
	which are Eisenstein after applying some shift $i\in \vZ$:
	\begin{equation*}
		\overline E =
			\{f(x)\in\vZ^{n+1} \,:\, f(x+i)\in E \text{ for some } i \in \vZ \}.
	\end{equation*}
	We call these polynomials \emph{shifted Eisenstein}.
\end{definition}

In order to compute the density of $\overline{E}$, it we need to consider each
prime $p$ separately.
We do this by working over the $p$-adic integers.
\begin{definition}
	Let $\overline E_p \subseteq \vZ_p^{n+1}$ be the set of degree $n$ polynomials
	of $\vZ_p[x]$ which are Eisenstein after applying some shift $i\in \vZ_p$:
	\begin{equation*}
		\overline E_p =
			\{f(x)\in\vZ_p^{n+1} \,:\, f(x+i)\in E_p \text{ for some } i \in \vZ_p \}.
	\end{equation*}
	We also call these polynomials shifted Eisenstein, since it will always be
	clear from the context to which set we are referring.
\end{definition}

Notice that $\overline{E}_p\cap \vZ^{n+1}$ are exactly the polynomials of
$\vZ[x]$ of degree $n$ which are shifted $p$-Eisenstein.

\begin{lemma}\label{thm:disjoint_shifted}
	If $f(x) \in \vZ_p^{n+1}$ is shifted Eisenstein, then it is so with respect to
	exactly one rational integer shift $i \in \{0, \ldots, p-1\}$.
	In other words,
	\begin{equation*}
		\overline E_p = \bigsqcup_{i = 0}^{p-1} \sigma^{-i} E_p.
	\end{equation*}
\end{lemma}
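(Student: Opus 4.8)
The plan is to reduce the entire statement to two ingredients: a \emph{rigidity} fact modulo $p$, which forces the residue class of any admissible shift, and a \emph{stability} fact, which says that shifting an Eisenstein polynomial by an element of $p\vZ_p$ does not destroy the Eisenstein property. First I would record the reduction-mod-$p$ description of $E_p$. If $g(x)=\sum_k\beta_k x^k\in E_p$, then $\beta_n$ is a unit while $\beta_0,\dots,\beta_{n-1}\in p\vZ_p$, so $g(x)\equiv\beta_n x^n\pmod{p}$. Since reduction commutes with the substitution $x\mapsto x+j$, writing $\bar j$ for the residue of $j$ and $\bar\alpha_n$ for that of the (shift-invariant) leading coefficient of $f$, the condition $f(x+j)\in E_p$ yields $\bar f(x+\bar j)=\bar\alpha_n x^n$ in $\vF_p[x]$, hence
\begin{equation*}
	\bar f(x)=\bar\alpha_n\,(x-\bar j)^n .
\end{equation*}
Because $\bar\alpha_n\neq 0$, the polynomial $\bar f$ has a single root $\bar j\in\vF_p$ of multiplicity $n$, so $\bar j$ is determined by $f$ alone.

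From this rigidity the disjointness follows immediately. If $f\in\sigma^{-i}E_p\cap\sigma^{-i'}E_p$ with $i,i'\in\{0,\dots,p-1\}$, then both $f(x+i)$ and $f(x+i')$ lie in $E_p$, so by the displayed identity $\bar\imath$ and $\bar{\imath'}$ both equal the unique root of $\bar f$; since $i,i'$ are the canonical representatives in $\{0,\dots,p-1\}$, this gives $i=i'$. Thus the sets $\sigma^{-i}E_p$ are pairwise disjoint, and the inclusion $\bigsqcup_{i=0}^{p-1}\sigma^{-i}E_p\subseteq\overline E_p$ is clear, as an integer shift is in particular a $p$-adic shift.

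The substance of the lemma is the reverse inclusion $\overline E_p\subseteq\bigcup_{i=0}^{p-1}\sigma^{-i}E_p$, and this is where I expect the only real work. Given $f\in\overline E_p$, pick $j\in\vZ_p$ with $g:=f(x+j)\in E_p$, and let $i\in\{0,\dots,p-1\}$ be the representative of the forced residue $\bar j$. Then $s:=i-j\in p\vZ_p$ and $f(x+i)=g(x+s)$, so it suffices to prove the stability claim: \emph{if $g\in E_p$ and $s\in p\vZ_p$, then $g(x+s)\in E_p$.} The plan here is a direct valuation estimate on the coefficients $\gamma_m=\sum_{k\ge m}\binom{k}{m}\beta_k\,s^{k-m}$ of $g(x+s)$. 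The leading coefficient is unchanged, $\gamma_n=\beta_n$, hence a unit; for $0\le m\le n-1$ every summand has $v_p\ge 1$ (the term $k=m$ because $\beta_m\in p\vZ_p$, and each term $k>m$ because either $\beta_k\in p\vZ_p$ or, when $k=n$, because $(n-m)\,v_p(s)\ge 1$), so $\gamma_m\in p\vZ_p$; finally $\gamma_0=g(s)=\beta_0+\sum_{k\ge 1}\beta_k s^k$ has $v_p(\beta_0)=1$ while every other term has $v_p\ge 2$, whence $v_p(\gamma_0)=1$.

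The point to flag is that the estimate on $\gamma_0$ relies on $v_p(\beta_n s^n)=n\,v_p(s)\ge n\ge 2$, i.e.\ on $n\ge 2$; equivalently, in Newton-polygon terms, the roots of an Eisenstein polynomial have valuation $1/n<1\le v_p(s)$, so subtracting $s$ leaves their valuations, and hence the Newton polygon, unchanged. For $n=1$ the statement genuinely fails (for instance $f(x)=x$ over $\vZ_2$ is shifted Eisenstein via $j=2$, yet neither $f(x)$ nor $f(x+1)$ is Eisenstein), but this is immaterial since Eisenstein's criterion is only of interest for $n\ge 2$. With the stability claim in hand, $f(x+i)=g(x+s)\in E_p$, so $f\in\sigma^{-i}E_p$, completing the inclusion and therefore the proof.
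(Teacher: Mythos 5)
Your proof is correct and takes essentially the same route as the paper's: both arguments rest on (a) the fact that an Eisenstein polynomial reduces to $\bar\alpha_n x^n$ modulo $p$, which pins down the residue of any admissible shift and yields disjointness, and (b) stability of the Eisenstein property under shifts in $p\vZ_p$, established by the same valuation estimate on the constant term. Your observation that stability requires $n \ge 2$ is accurate --- the paper's proof implicitly uses this as well, when it invokes $\alpha_1 \in p\vZ_p$ --- but it is harmless, since the paper only ever applies the lemma for $n \ge 2$.
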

\begin{proof}
	We clearly have
	\begin{equation*}
		\bigcup_{i = 0}^{p-1} \sigma^{-i} E_p \subseteq \overline E_p.
	\end{equation*}
	The other inclusion is easy but not completely trivial.

	Let $f(x) = \sum^n_{i=0} \alpha_i x^i \in E_p$ and $k \in \vZ_p$.
	We first show that $f(x + kp)$ is also Eisenstein:
	Clearly $f(x)=f(x+kp)$ in $\vZ_p/p\vZ_p$, so the only condition which one has
	to check is that the coefficient of the term of degree zero of $f(x+kp)$ is
	not in $p^2\vZ_p$.
	This coefficient is in fact
	$f(kp) = \alpha_0 + \alpha_1 k p + \sum^{n}_{i=2} \alpha_i k^i p^i$.
	Modulo $p^2\vZ_p$ we have that
	\begin{itemize}
		\item $\alpha_i k^i p^i$ is congruent to zero for $i\geq 2$,
		\item $\alpha_1 k p$ is congruent to zero since $\alpha_1$ is in $p\vZ_p$,
		\item $\alpha_0$ is not congruent to zero since the polynomial $f(x)$ is
			Eisenstein,
	\end{itemize}
	from which it follows that the polynomial $f(x+kp)$ is Eisenstein in
	$\vZ_p[x]$.

	Let now $f(x) \in \overline{E}_p$, then $f(x+u)$ is Eisenstein for some
	$u \in \vZ_p$.
	The inclusion
	\begin{equation*}
		\overline E_p \subseteq \bigcup_{i = 0}^{p-1} \sigma^{-i} E_p
	\end{equation*}
	will follow if we show that we can select $u$ in $\{0, \ldots, p-1\}$.
	Write $u = kp + i$ with $i \in \{0, \ldots, p-1\}$ and $k \in \vZ_p$.
	Using what we proved above, we see that $f(x + u - kp) = f(x + i)$ is
	Eisenstein, and the inclusion follows.

	We now show that the union is disjoint, i.e. $\sigma^{-i} E_p\cap \sigma^{-j}
	E_p=\emptyset$ for any $i,j\in \{0,\dots,p-1\}$ and $i\neq j$.
	Without loss of generality, we can assume $i>j$. Then
	\begin{equation*}
		\sigma^{-i} E_p \cap \sigma^{-j} E_p = \emptyset
		\:\Longleftrightarrow\: E_p \cap \sigma^{i-j} E_p = \emptyset.
	\end{equation*}
	Let $t:=j-i$ and $\sum^{n}_{i=0} \alpha_k x^k=f(x)\in E_p$, then the
	coefficient of the degree zero term of $f(x+t)$ is $f(t)=\alpha_n t^n+
	\sum^{n-1}_{k=0} \alpha_k t^k$.
	Now, the reduction of $\alpha_k$ modulo $p$ is zero for any $k<n-1$ and
	$\alpha_n$ and $t$ are invertible modulo $p$, so $f(x+t)$ is not Eisenstein.
\end{proof}

Let $\mu_p$ be the $p$-adic measure on $\vZ_p^{n+1}$ and $\mu_{\infty}$ the
Lebesgue measure on $\vR^{n+1}$. (For basics on the $p$-adic measure, we refer
to~\cite{bib:robert2013course}.)
\begin{lemma} In the above notation we have
	\[\mu_p(\overline E_p)=\frac{(p-1)^2}{p^{n+1}}.\]
\end{lemma}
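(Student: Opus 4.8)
The plan is to reduce the computation to a single measure $\mu_p(E_p)$ by exploiting the disjoint decomposition from Lemma~\ref{thm:disjoint_shifted}. Since that lemma gives
\begin{equation*}
	\overline E_p = \bigsqcup_{i=0}^{p-1} \sigma^{-i} E_p,
\end{equation*}
countable (here finite) additivity of $\mu_p$ yields $\mu_p(\overline E_p) = \sum_{i=0}^{p-1} \mu_p(\sigma^{-i} E_p)$. The key structural observation I would use next is that $\sigma$ is a linear automorphism of $\vZ_p^{n+1}$ of determinant one, hence lies in $\GL_{n+1}(\vZ_p)$ with unit determinant; such maps preserve the normalized Haar measure $\mu_p$. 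Consequently each summand equals $\mu_p(E_p)$, and the whole problem collapses to
\begin{equation*}
	\mu_p(\overline E_p) = p\,\mu_p(E_p).
\end{equation*}

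It then remains to compute $\mu_p(E_p)$ directly from the Eisenstein conditions. Writing $f(x) = \sum_{i=0}^n \alpha_i x^i$, the defining constraints act on pairwise distinct coordinates $\alpha_0, \ldots, \alpha_n$, so the set $E_p$ is a product of cylinder conditions and its measure factors as a product. I would record the three contributions separately: the leading coefficient must satisfy $\alpha_n \notin p\vZ_p$, contributing $\mu_p(\vZ_p^\times) = (p-1)/p$; each of the $n-1$ middle coefficients $\alpha_1, \ldots, \alpha_{n-1}$ must lie in $p\vZ_p$, contributing $1/p$ each, for a total of $1/p^{n-1}$; and the constant term must lie in the annulus $p\vZ_p \setminus p^2\vZ_p$, of measure $1/p - 1/p^2 = (p-1)/p^2$. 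Multiplying gives $\mu_p(E_p) = (p-1)^2/p^{n+2}$, and hence $\mu_p(\overline E_p) = p\cdot(p-1)^2/p^{n+2} = (p-1)^2/p^{n+1}$, as claimed.

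The main obstacle is not the arithmetic but the justification that $\sigma$ preserves $\mu_p$: one must be explicit that a $\vZ_p$-linear map with determinant in $\vZ_p^\times$ scales the measure of any measurable set by $\abs{\det}_p$, which is $1$ here. Granting the determinant-one property already asserted in the text, this is routine, but it is the conceptual pivot that turns the $p$-fold union into a clean multiplicative factor. The remaining care is purely bookkeeping: keeping the $\alpha_0$ condition (the half-open annulus rather than all of $p\vZ_p$) distinct from the conditions on $\alpha_1, \ldots, \alpha_{n-1}$, so that the exponent of $p$ comes out correctly.
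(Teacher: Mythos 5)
Your proposal is correct and follows exactly the paper's own argument: use the disjoint decomposition of $\overline E_p$ from Lemma~\ref{thm:disjoint_shifted} together with the fact that the determinant-one map $\sigma$ preserves $\mu_p$ to get $\mu_p(\overline E_p) = p\,\mu_p(E_p)$, then compute $\mu_p(E_p) = (p-1)^2/p^{n+2}$ from the product structure $E_p = (p\vZ_p \setminus p^2\vZ_p) \times (p\vZ_p)^{n-1} \times (\vZ_p \setminus p\vZ_p)$. You simply spell out the measure computation that the paper leaves as ``easy,'' which is fine.
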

\begin{proof}
	Since $\sigma^{-1}$ has determinant one, it does not change the $p$-adic
	volumes.
	Therefore, by Lemma~\ref{thm:disjoint_shifted}, one has $\mu_p(\overline
	E_p)=p\cdot \mu_p(E_p)$.
	It is easy to compute the measure $\mu_p(E_p)$ by writing $E_p = (p\vZ_p
	\setminus p^2\vZ_p) \times (p\vZ_p)^{n-1} \times (\vZ_p \setminus p\vZ_p)$.
\end{proof}

In order to obtain the density $\rho(\overline E)$ from the local data
$\{\mu_p(\overline E_p)\}_p$, we will use the following
lemma~\cite[Lemma~20]{bib:poonenAnn}.

\begin{lemma}\label{thm:bjornstoll}
	Suppose $U_\infty\subseteq \vR^d$ is such that $\vR^+\cdot U_\infty=U_\infty$,
	$\mu_\infty(\partial U_\infty)=0$.
	Let $U_\infty^1=U_\infty\cap [-1,1]^d$ and $s_\infty=\mu_\infty(U_\infty^1)$.
	Let $U_p\subseteq \vZ_p^d$, $\mu_p(\partial U_p)=0$ and $s_p=\mu_p(U_p)$ for
	each prime $p$.
	Let $M_\vQ$ be the set of places of $\vQ$.
	Moreover, suppose that
	\begin{equation}\label{eq:conditiondens}
		\lim_{M \to \infty} \rho(\{a \in \vZ^d \,:\, a\in U_p
			\text{ for some finite prime $p$ greater than $M$}\}) = 0.
	\end{equation}
	Let $P\colon \vZ^d \longrightarrow 2^{M_\vQ}$ be defined as
	$P(a) = \{v\in M_\vQ \,:\, a \in U_v\}$.
	Then we have:
	\begin{enumerate}
		\item $\sum_v s_v$ converges.
		\item For any $T \subseteq 2^{M_\vQ}$, $\nu(T):=\rho(P^{-1}(T))$ exists and
			defines a measure on $2^{M_\vQ}$, which is concentrated at the finite
			subsets of $M_\vQ$.
		\item Let $S$ be a finite subset of $M_\vQ$, then
			\begin{equation*}
				\nu(\{S\}) = \prod_{v \in S} s_v \prod_{v \notin S} (1-s_v).
			\end{equation*}
	\end{enumerate}
\end{lemma}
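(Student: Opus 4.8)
The plan is to prove the three assertions by a \emph{truncation} argument: first compute exact densities for conditions that involve only finitely many places, where the Chinese Remainder Theorem and equidistribution make the events $\{a\in U_v\}$ genuinely independent, and then invoke hypothesis~\eqref{eq:conditiondens} to control the contribution of the infinitely many remaining primes. The starting point is that for each place $v$ the set $\{a\in\vZ^d : a\in U_v\}$ has natural density $s_v$. For a finite prime $p$ this follows because $\vZ^d$ equidistributes in $\vZ_p^d$ and $\mu_p(\partial U_p)=0$, so that the proportion of a large box lying in $U_p$ tends to $\mu_p(U_p)=s_p$; here one approximates $U_p$ by cylinder sets modulo $p^k$, on which the condition is periodic. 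For the archimedean place the scale invariance $\vR^+\cdot U_\infty=U_\infty$ together with $\mu_\infty(\partial U_\infty)=0$ shows (with $\mu_\infty$ normalised so that $[-1,1]^d$ has unit measure) that the density of integer points in $U_\infty$ equals $\mu_\infty(U_\infty^1)=s_\infty$.

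Next I would prove a finite factorisation statement: for a finite set of places $W$ containing $\infty$ and all primes up to some bound $M$, and any $S_0\subseteq W$,
\[
	\rho\bigl(\{a\in\vZ^d : P(a)\cap W = S_0\}\bigr)
		= \prod_{v\in S_0} s_v \prod_{v\in W\setminus S_0}(1-s_v).
\]
The conditions at the finite primes of $W$ are congruence conditions modulo a fixed power of $\prod_{p\le M}p$, so by the Chinese Remainder Theorem they are mutually independent and each contributes the factor $s_p$ or $1-s_p$. The archimedean condition is scale invariant, hence asymptotically uncorrelated with the periodic $p$-adic conditions, and contributes the independent factor $s_\infty$ or $1-s_\infty$; multiplying these yields the display.

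The remaining work is the passage to the limit $M\to\infty$, and this is where the only real difficulty lies: natural density is merely finitely additive, so the union over all primes cannot be handled by countable additivity, and hypothesis~\eqref{eq:conditiondens} is exactly the uniform tail bound that rescues the argument. Writing $V_{>M}=\{a : a\in U_p \text{ for some prime } p>M\}$, comparison of $V_{>M}$ with the finite unions over $M<p\le M'$, whose densities are $1-\prod_{M<p\le M'}(1-s_p)$ by the factorisation, gives $1-\prod_{p>M}(1-s_p)\le\rho(V_{>M})\to0$; hence $\prod_{p>M}(1-s_p)\to1$, so $\sum_v s_v$ converges, which is~(1). The same estimate shows $\{a : P(a)\text{ infinite}\}$ lies in every $V_{>M}$ and thus has density zero, so $\nu$ is supported on finite subsets. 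For a finite $S$, choosing $M$ above every prime of $S$, taking $W=\{v\le M\}$ and $A_{S,M}=\{a : a\in U_v\ \forall v\in S,\ a\notin U_v\ \forall v\in W\setminus S\}$, the factorisation gives $\rho(A_{S,M})=\prod_{v\in S}s_v\prod_{v\in W\setminus S}(1-s_v)$, while $A_{S,M}\setminus V_{>M}\subseteq P^{-1}(\{S\})\subseteq A_{S,M}$; letting $M\to\infty$ and squeezing, using $\rho(V_{>M})\to0$ and the convergence of the product guaranteed by~(1), proves~(3). Finally the identity $\sum_{S\text{ finite}}\prod_{v\in S}s_v\prod_{v\notin S}(1-s_v)=1$, valid precisely because $\sum_v s_v<\infty$, shows that the values $\nu(\{S\})$ assemble into a probability measure; and for arbitrary $T$ one reduces $\rho(P^{-1}(T))$ to the convergent sum $\sum_{S\in T\text{ finite}}\nu(\{S\})$ by approximating through the finitely many $S\subseteq W$, absorbing the error into $V_{>M}$ and applying dominated convergence as $M\to\infty$, which gives~(2).
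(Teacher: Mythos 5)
Your proposal is correct in outline, but note that the paper does not actually prove this lemma at all --- its ``proof'' is just the citation \cite[Lemma~20]{bib:poonenAnn} --- and your argument is essentially a reconstruction of the proof in that reference: exact densities at each place via clopen (cylinder) approximation and Jordan measurability, independence for finitely many places via the Chinese Remainder Theorem and lattice-point counting, and then the tail hypothesis~(\ref{eq:conditiondens}) to justify the passage from finitely many places to all of $M_\vQ$ by squeezing. So you have filled in precisely what the paper delegates to Poonen and Stoll, following the same route as their proof rather than a genuinely different one.
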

\begin{proof}
	For the proof, see~\cite[Lemma~20]{bib:poonenAnn}.
\end{proof}

After showing that condition (\ref{eq:conditiondens}) applies, we can use
Lemma~\ref{thm:bjornstoll} to determine the density of shifted Eisenstein
polynomials over the integers.

\begin{theorem}\label{thm:shifted3}
Let $n \ge 3$.
The density of shifted Eisenstein polynomials of degree $n$ is
\begin{equation}\label{eq:shifted3}
	\rho(\overline E) = 1 - \smashoperator{\prod_{p \text{ prime}}} \;
		\left(1-\frac{(p-1)^2}{p^{n+1}}\right).
\end{equation}
\end{theorem}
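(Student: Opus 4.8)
The plan is to apply the local-to-global principle of Lemma~\ref{thm:bjornstoll} in dimension $d=n+1$, taking $U_p=\overline E_p$ at every finite prime $p$ and $U_\infty=\emptyset$ at the archimedean place. The choice $U_\infty=\emptyset$ reflects that being Eisenstein imposes no archimedean constraint; it satisfies $\vR^+\cdot U_\infty=U_\infty$ and $\mu_\infty(\partial U_\infty)=0$ trivially, and gives $s_\infty=0$. By the observation following the definition of $\overline E_p$ together with Lemma~\ref{thm:disjoint_shifted}, an integer polynomial $f$ is shifted Eisenstein over $\vZ$ exactly when $f\in\overline E_p$ for some finite prime $p$; equivalently $f\in\overline E$ iff $P(f)\neq\emptyset$, where $P$ is the map of Lemma~\ref{thm:bjornstoll}. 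Thus $\overline E=P^{-1}(\{S:S\neq\emptyset\})$, and the whole computation reduces to evaluating $\nu(\{\emptyset\})$.

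Two of the hypotheses are immediate. For each $p$, the set $\overline E_p=\bigsqcup_{i=0}^{p-1}\sigma^{-i}E_p$ is a finite union of sets defined by congruences modulo $p^2$, hence clopen in $\vZ_p^{n+1}$, so $\partial\overline E_p=\emptyset$ and $\mu_p(\partial\overline E_p)=0$; by the measure computation above, $s_p=\mu_p(\overline E_p)=(p-1)^2/p^{n+1}$. Since membership in $\overline E_p$ depends only on $f\bmod p^2$, the set $\overline E_p\cap\vZ^{n+1}$ is a finite union of residue classes modulo $p^2$ and therefore has natural density over $\vZ$ equal to $s_p$. In particular $s_p\le p^{1-n}$, so $\sum_p s_p$ converges precisely because $n\ge 3$, and $\prod_p(1-s_p)$ converges to a nonzero value.

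The one substantial step is verifying condition~(\ref{eq:conditiondens}), namely that the upper density $\overline{\rho}(S_M)$ tends to $0$ as $M\to\infty$ (which is the form in which the hypothesis is used), where $S_M=\{f\in\vZ^{n+1}: f\in\overline E_p\text{ for some }p>M\}$. I would split the primes into a medium and a large range. For medium primes, finite subadditivity of the upper density together with the density computation above gives, uniformly in $Q$, $\overline{\rho}\big(\bigcup_{M<p\le Q}(\overline E_p\cap\vZ^{n+1})\big)\le\sum_{p>M}s_p\le\sum_{p>M}p^{1-n}$, which is smaller than any $\varepsilon$ once $M$ is large; this is exactly where $n\ge 3$ is used, the series diverging when $n=2$. \textbf{The hard part is the large-prime tail} $\bigcup_{p>Q}$. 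Here I would use that if $f$ is shifted $p$-Eisenstein with shift $i$ and leading coefficient $\alpha_n$, then $f\equiv\alpha_n(x-i)^n\pmod p$; writing $g(x)=f(x+i)$, whose constant term has $p$-adic valuation exactly $1$ and whose leading coefficient is a unit, the Newton polygon of $g$ is the single segment from $(0,1)$ to $(n,0)$, so every root of $g$ has valuation $1/n$ and $v_p(\disc f)=v_p(\disc g)\ge 2\binom{n}{2}\cdot\tfrac1n=n-1\ge 2$. Hence $p^2\mid\disc f$ whenever $\disc f\neq0$, which reduces the large-prime estimate to bounding the density of $f$ whose discriminant is divisible by the square of a large prime, a power-free-values sieve; the exceptional locus $\{\disc f=0\}$ is a proper subvariety and has density $0$. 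Controlling this large-prime tail is the main technical obstacle.

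Once~(\ref{eq:conditiondens}) is in place, Lemma~\ref{thm:bjornstoll} applies. The measure $\nu$ has total mass $\rho(\vZ^{n+1})=1$ and is concentrated on the finite subsets of $M_\vQ$, and part~(3) yields $\nu(\{\emptyset\})=\prod_v(1-s_v)=(1-s_\infty)\prod_p(1-s_p)=\prod_p\big(1-(p-1)^2/p^{n+1}\big)$. Consequently $\rho(\overline E)=\nu(\{S:S\neq\emptyset\})=1-\nu(\{\emptyset\})=1-\prod_{p}\big(1-(p-1)^2/p^{n+1}\big)$, which is~(\ref{eq:shifted3}).
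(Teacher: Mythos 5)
Your framework is the same as the paper's (Lemma~\ref{thm:bjornstoll} with $U_p=\overline E_p$, $U_\infty=\emptyset$, clopenness of $\overline E_p$, $s_p=(p-1)^2/p^{n+1}$, and the final product formula), and your treatment of the "medium" primes by finite subadditivity is sound. But the proposal does not prove the theorem: the large-prime part of condition~(\ref{eq:conditiondens}), which you yourself describe as ``the main technical obstacle,'' is precisely the substance of the proof, and your proposed reduction does not resolve it --- if anything, it makes it harder. Replacing the shifted-Eisenstein condition by the weaker statement ``$p^2\mid\disc f$'' and appealing to ``a power-free-values sieve'' converts the problem into the tail estimate of a squarefree sieve for the discriminant form: bounding the density of $f$ in a box of side $2B$ with $p^2\mid\disc f$ for some prime $p$ ranging up to a power of $B$. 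That tail estimate is a notoriously delicate problem (Hooley, Greaves, Poonen; in general conditional on abc), not a quotable routine step. Moreover, the reduction discards exactly the structure that makes an elementary argument possible: for shifted $p$-Eisenstein $f$ one knows not merely $p^2\mid\disc f$ but $f\equiv a(x-i)^n\pmod p$, i.e.\ the reduction of $f$ lies in a subset of $\vF_p^{n+1}$ of size about $p^2$, whereas the locus $p\mid\disc f$ has about $p^n$ points mod $p$, so its local densities $\sim 1/p$ are not even summable. (There is also a structural slip: your split into ranges $(M,Q]$ and $(Q,\infty)$ with $Q$ fixed makes no progress, since the tail $\bigcup_{p>Q}$ is again a set of the same kind as the one you started with; the cutoff must depend on $B$ inside the limsup.)

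The paper closes this gap using the information you threw away, in three elementary steps. First, the full strength of the discriminant divisibility --- $p^{n-1}\mid\disc f\neq 0$, which you in fact derived via Newton polygons before weakening it to $p^2$ --- together with $\abs{\disc f}\le DB^{2n-2}$ shows that $\overline E_p\cap[-B,B[^{n+1}$ is \emph{empty} for $p>CB^2$, so for each fixed $B$ the union over $p>M$ is finite. Second, for $2B<p\le CB^2$ the box $[-B,B[^{n+1}$ injects into $\vF_p^{n+1}$, and the image of $\overline E_p\cap[-B,B[^{n+1}$ consists of polynomials $a(x-i)^n$ whose degree-$n$ and degree-$(n-1)$ coefficients $a$ and $-nai$ both lie in $[-B,B[$; since $a$ and $n$ are invertible mod $p$, this gives at most $(2B)^2$ polynomials per prime, and summing over $p\le CB^2$ yields a contribution $O\big(\pi(CB^2)\,(2B)^{1-n}\big)\to 0$ by the prime number theorem and $n\ge 3$. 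Third, for $M<p\le 2B$, the projection $\vZ^{n+1}\to\vF_p^{n+1}$ is at most $\ceil{2B/p}^{n+1}$-to-one onto a set of size at most $p^2$, giving the bound $4^{n+1}B^{n+1}/p^{n-1}$ per prime, whose sum over $p>M$ tends to $0$ as $M\to\infty$. Arguments of this kind (or an actual proof of the squarefree-sieve tail you invoke) are what is missing from your proposal.
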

\begin{proof}
Set $U_p=\overline{E}_p$ for all $p$ and $U_\infty=\emptyset$.
The conditions $\mu_p(\partial U_p) = 0$ hold since $U_p$ is both closed and
open.
Notice that in the notation of Lemma~\ref{thm:bjornstoll} we have that
$P^{-1}(\{\emptyset\})$ equals the complement of $E$.
Therefore, if condition (\ref{eq:conditiondens}) is verified, we get the claim:
\begin{equation*}
	\rho(\overline E) = 1-\smashoperator{\prod_{p \text{ prime}}}
		\left(1-s_p\right)
	= 1-\smashoperator{\prod_{p \text{ prime}}} \;
		\left(1-\frac{(p-1)^2}{p^{d+1}}\right).
\end{equation*}

Let us now show that the condition indeed holds:
\begin{gather}
	\lim_{M \to \infty}\overline{\rho}
		(\{a\in \vZ^{n+1} \,:\, a\in \overline{E}_p
		\text{ for some finite prime $p$ greater than $M$}\}) \nonumber \\
	= \lim_{M \to \infty}\limsup_{B \to \infty}
		\frac{\abs[\big]{\bigcup_{p>M}\overline{E}_p\cap [-B,B[^{n+1}}}{(2B)^{n+1}}.
		\label{eq:fundamentallimit}
\end{gather}
We have $\overline E_p \cap [-B,B[^{n+1}\,=\emptyset$ for $p>CB^2$, where $C$ is
a constant depending only on the degree $n$.
One can see that using the following argument:
Let $f(x)$ be a polynomial in $[-B,B[^{n+1}$ for which $f(x+i)$ is Eisenstein,
then~\cite[Lemma~1]{bib:heyman2014shifted}
\begin{equation*}
	p^{n-1} \mid \disc(f(x+i)) = \disc(f(x)) \ne 0.
\end{equation*}
Now, the discriminant of $f(x)$ is a polynomial of degree $2n-2$ in the
coefficients, whence
\begin{equation*}
	p^{n-1}\leq\disc(f(x))\leq D B^{2n-2}
\end{equation*}
for some constant $D$ depending only on $n$.
Therefore, for $C=D^{1/(n-1)}$, we have $p\leq C B^{2}$.
Thus, we have just shown that for fixed $B$, the union in
(\ref{eq:fundamentallimit}) is finite, and we can bound it by
\begin{gather}
	\lim_{M \to \infty}\limsup_{B \to \infty}
		\frac{\abs[\big]{\bigcup_{CB^2>p>M}\overline{E}_p \cap [-B,B[^{n+1}}}
		{(2B)^{n+1}} \nonumber \\
	\leq \lim_{M \to \infty}\limsup_{B \to \infty}
		\smashoperator[r]{\sum_{CB^2>p>M}}
		\frac{\abs[\big]{\overline{E}_p\cap [-B,B[^{n+1}}}{(2B)^{n+1}}.
		\label{eq:equationsum}
\end{gather}
Given the order of the limits, we can fix the following setting:
$M>n$ and $B>M$.
Now let us bound $\abs[\big]{\overline{E}_p\cap [-B,B[^{n+1}}$ in the following
two cases:
\begin{enumerate}
\item $2B<p$:
In this case, we can consider $[-B,B[^{n+1}$ as a subset of
$\smash{\vF_p^{n+1}}$ without losing any information.
The reader should notice that modulo $p$, the elements of $\overline{E}_p$ have
a multiple root of order $n$ at some $i\in \vF_p$.
Now, the key observation is the following:
The reduction modulo $p$ of the polynomials in $[-B,B[^{n+1}\;\cap\;
\overline{E}_p$ is contained in the set
\begin{equation*}
	S_p := \{a(x-i)^n \,:\,
		a \in [-B,B[\,\setminus \{0\} \text{ and } {-nai} \in [-B,B[\}.
\end{equation*}
This represents the condition that the degree $n$ and $n-1$ coefficients live in
$[-B,B[$:
\begin{equation*}
		[-B,B[^{n+1} \,\cap\, \overline E_p\subseteq S_p.
\end{equation*}
Observe now that $\abs{S_p}=(2B-1)2B\leq (2B)^2$, since $n$ and $a$ are
invertible modulo $p$ (recall $p>M>n$).
We conclude that
\begin{equation*}
	\abs[\big]{[-B,B[^{n+1} \,\cap\, \overline E_p}\leq \abs{S_p}\leq (2B)^2.
\end{equation*}
Notice that this bound is uniform in $p$.

\item $2B\geq p$: In this case, the bound is more natural.
Consider the projection map
\begin{align*}
	\pi\colon \vZ^{n+1} &\longrightarrow \vF_p^{n+1} \\
	\intertext{and the shift map modulo $p$}
	\sigma^{-1}\colon \vF_p^{n+1} &\longrightarrow \vF_p^{n+1} \\
	f(x) &\longmapsto f(x-1).
\end{align*}

Consider the sets of polynomials $L_p := \{ax^n \,:\, a\in\vF_p^*\}$ and
\begin{equation}\label{eq:Lp_union}
	\overline{L}_p = \bigcup^{p-1}_{i=0} \sigma^{-i}L_p.
\end{equation}
We have $\abs{\overline L_p}\leq p^2$.

Notice that
\begin{equation}\label{eq:observation}
\pi([-B,B[^{n+1} \,\cap\, \overline E_p)\subseteq \overline{L}_p.
\end{equation}
At this step, we observe that the projection is at most $\ceil{2B/p}^{n+1}$ to
one, therefore we can bound $\abs[\big]{[-B,B[^{n+1} \,\cap\, \overline E_p}$
using the projection map and condition (\ref{eq:observation}):
\begin{equation*}
	\abs[\big]{[-B,B[^{n+1} \,\cap\, \overline E_p}
	\leq \abs{\overline{L}_p} \cdot \ceil{2B/p}^{n+1}
	\leq p^2 \left(\frac{2B}{p}+1\right)^{n+1}
	\leq p^2 \left(\frac{4B}{p}\right)^{n+1},
\end{equation*}
where the last inequality follows from $2B\geq p$.
At the end of the day, the bound we have is of the form
\begin{equation*}
	\abs[\big]{[-B,B[^{n+1} \,\cap\, \overline E_p}\leq 4^{n+1}
	\frac{B^{n+1}}{p^{n-1}}.
\end{equation*}
\end{enumerate}

Let us now come back to the sum in (\ref{eq:equationsum}), which we can split
according to the two cases above:
\begin{equation}\label{eq:estimate_sum}
	\smashoperator[r]{\sum_{CB^2>p>M}}
		\frac{\abs[\big]{\overline{E}_p \,\cap\, [-B,B[^{n+1}}}{(2B)^{n+1}}
	\leq \sum_{CB^2>p>2B}\frac{(2B)^2}{(2B)^{n+1}}
		+ \sum_{2B \ge p>M}\frac{2^{n+1}}{p^{n-1}}.
\end{equation}
Using the limit in $B$, the first sum goes to zero by the prime number theorem
since $n\geq 3$.
As $B$ goes to infinity, the other sum becomes a converging series (again
$n\geq 3$) starting at the index $M$.
Letting $M$ go to infinity, this too goes to zero.
Hence we have shown that condition~(\ref{eq:conditiondens}) holds, and the
theorem follows.
\end{proof}

In degree $2$, the above proof does not work:
Indeed, it is easily seen that $\sum_p s_p$ diverges for $n=2$, so by the first
claim of Lemma~\ref{thm:bjornstoll}, the proof we gave in degree greater or
equal than $3$ is doomed to fail in degree $2$.
However, we have a much simpler application of the lemma which shows that the
density of shifted Eisenstein polynomials of degree $2$ is indeed one, as
Theorem~\ref{thm:shifted3} suggests.

\begin{proposition}\label{thm:shifted2}
	The density of shifted Eisenstein polynomials of degree $n = 2$ is one.
\end{proposition}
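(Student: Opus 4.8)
The plan is to show that the complement $\vZ^{n+1}\setminus\overline E$ has upper density zero; since densities lie in $[0,1]$, this forces $\rho(\overline E)=1$. The starting point is the structural observation recorded just after the definition of $\overline E_p$: a polynomial lies in $\overline E$ precisely when it is shifted $p$-Eisenstein over $\vZ$ for some prime $p$, so that $\overline E=\bigcup_p(\overline E_p\cap\vZ^{n+1})$. Consequently a polynomial belongs to the complement of $\overline E$ exactly when it fails to be shifted Eisenstein at \emph{every} prime. I would then exploit the fact that for $n=2$ the local measures $s_p=\mu_p(\overline E_p)=\tfrac{(p-1)^2}{p^3}$ behave like $\tfrac1p$, so that $\sum_p s_p$ diverges and $\prod_p(1-s_p)=0$.

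Rather than feeding all primes into Lemma~\ref{thm:bjornstoll} at once (which is exactly what breaks down here, since $\sum_p s_p$ diverges and contradicts its first claim), I would apply the lemma in a truncated form. Fix a finite set $S$ of primes and set $U_p=\overline E_p$ for $p\in S$, $U_p=\emptyset$ for $p\notin S$, and $U_\infty=\emptyset$. Each $\overline E_p$ is clopen in $\vZ_p^{n+1}$, so $\mu_p(\partial U_p)=0$, and condition~(\ref{eq:conditiondens}) becomes trivial: for every $M\ge\max S$ the set appearing in~(\ref{eq:conditiondens}) is empty, hence has density $0$. Part~(3) of Lemma~\ref{thm:bjornstoll}, applied to the empty subset of places, then yields
\[
	\rho\bigl(P^{-1}(\{\emptyset\})\bigr)=\prod_{p\in S}\left(1-s_p\right).
\]

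To finish, I would note that the complement of $\overline E$ is contained in $P^{-1}(\{\emptyset\})$, since a polynomial that is shifted Eisenstein at no prime is in particular shifted Eisenstein at no $p\in S$. Monotonicity of the upper density then gives
\[
	\overline\rho\bigl(\vZ^{n+1}\setminus\overline E\bigr)\le\prod_{p\in S}\left(1-s_p\right)
\]
for every finite $S$, and letting $S$ exhaust the primes makes the right-hand side tend to $0$ because $\sum_p s_p$ diverges. Thus the complement has upper density $0$ and $\rho(\overline E)=1$. The only points requiring attention are the verification that the truncated family meets the hypotheses of Lemma~\ref{thm:bjornstoll} (immediate, as all but finitely many $U_p$ vanish) and the standard elementary fact that a divergent $\sum_p s_p$ with $0\le s_p<1$ forces $\prod_p(1-s_p)=0$; neither is a genuine obstacle, which is precisely what makes the degree-$2$ argument ``much simpler'' than the proof of Theorem~\ref{thm:shifted3}.
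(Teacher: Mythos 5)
Your proposal is correct and is essentially the paper's own argument: both truncate the family of local sets to finitely many primes so that condition~(\ref{eq:conditiondens}) holds trivially, apply Lemma~\ref{thm:bjornstoll} to this truncated family, and conclude by letting the truncation grow, using that $\sum_p (p-1)^2/p^3$ diverges so the product tends to zero. The only cosmetic difference is that you bound the upper density of the complement of $\overline E$ via $P^{-1}(\{\emptyset\})$, whereas the paper equivalently bounds $\underline\rho(\overline{E})$ from below by the density of $\bigcup_{p \le M} \overline{E}_p \cap \vZ^{3}$; these are dual formulations of the same estimate.
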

\begin{proof}
	Let again $U_\infty = \emptyset$.
	We now apply Lemma~\ref{thm:bjornstoll} to a truncated sequence of sets.
	For this, let $M$ be a positive integer and
	\begin{equation*}
		U_p = \begin{cases}
			\overline{E}_p &\text{ if } p \le M \\
			\emptyset &\text{ if } p > M.
		\end{cases}
	\end{equation*}
	This truncated sequence now automatically satisfies
	condition~(\ref{eq:conditiondens}), and we get the density
	\begin{equation*}
		\underline\rho(\overline{E})
		\ge \rho\Big(\smashoperator[r]{\bigcup_{p \le M}}
			\overline{E}_p \cap \vZ^{3}\Big)
		= 1 - \smashoperator{\prod_{p \le M}}\left(1-\frac{(p-1)^2}{p^{3}}\right).
	\end{equation*}
	Letting $M$ tend to infinity gives $\rho(\overline{E}) = 1$, as the product
	diverges to zero.
\end{proof}

\begin{remark}
	Even though the density of shifted Eisenstein polynomials of degree $2$ is
	one, not all irreducible polynomials are Eisenstein for some shift (or even
	affine transformation):
	Take for example the polynomial $f(x) = x^2 + 8x - 16$, which is irreducible
	over $\vZ$.
	Its discriminant is $2^7$, so it could only be shifted Eisenstein with respect
	to $2$.
	But neither $f(x)$ nor $f(x+1) = x^2 + 10x - 7$ is $2$-Eisenstein.
\end{remark}

\section{Affine Eisenstein Polynomials}
\label{sec:affine}

In~\cite[Section~7]{bib:heyman2014shifted}, the question was also raised about
the density of polynomials that become Eisenstein after an arbitrary affine
transformation, instead of only considering shifts.
We can address this question as well, using the same methods as in
Section~\ref{sec:shifted}.

\begin{definition}\label{def:affine_transformation}
	For $f(x) \in R^{n+1}$ and
	$A = \begin{psmallmatrix} a & b \\ c & d \end{psmallmatrix}\in R^{2\times 2}$,
	we define the \emph{affine transformation of $f$ by $A$} as
	\begin{equation*}
		f * A := (cx + d)^n f\bigg(\frac{ax + b}{cx + d}\bigg).
	\end{equation*}
	It is easy to see that, when restricted to $\GL_2(R)$, this is a right
	group action.
\end{definition}

Like in Section~\ref{sec:shifted}, we consider the set of polynomials with
integer coefficients that become Eisenstein after some affine transformation.

\begin{definition}
	Let $\widetilde{E} \subseteq \vZ^{n+1}$ be the set of degree $n$ polynomials
	which become Eisenstein of degree $n$ after some affine transformation
	$A\in \vZ^{2 \times 2}$:
	\begin{equation*}
		\widetilde{E} = \{f(x)\in\vZ^{n+1} \,:\, f * A \in E \text{ for some }
			A \in \vZ^{2 \times 2} \}.
	\end{equation*}
	We call these polynomials \emph{affine Eisenstein}.
\end{definition}

It is easy to see that if both $f$ and $f * A$ have degree $n$ and $f * A$ is
irreducible, then so is $f$.
Hence, an affine Eisenstein polynomial is irreducible.

\begin{remark}\label{rem:eislowdeg}
	The reader should notice that also in this case, we only consider affine
	Eisenstein polynomials of degree \emph{exactly} $n$.
	Nevertheless an observation is required: It could happen that a degree $n$
	polynomial becomes Eisenstein of \emph{lower} degree after some affine
	transformation.
	Fortunately, it can be seen that a polynomial for which this happens is never
	irreducible.
	Likewise, a polynomial of degree less than $n$ cannot become Eisenstein of
	degree $n$ after an affine transformation, since any transformation that
	increases the degree introduces factors $cx + d$.
\end{remark}

We again consider each prime separately by working over the $p$-adic integers.

\begin{definition}
	Let $\widetilde{E}_p \subseteq \vZ_p^{n+1}$ be the set of degree $n$
	polynomials of $\vZ_p[x]$ which become Eisenstein of degree $n$ after some
	affine transformation $A \in \vZ_p^{2 \times 2}$:
	\begin{equation*}
		\widetilde{E}_p = \{f(x)\in\vZ_p^{n+1} \,:\, f * A \in E_p \text{ for some }
			A \in \vZ_p^{2 \times 2} \}.
	\end{equation*}
	We also call these polynomials affine Eisenstein, since it will always be
	clear from the context to which set we are referring.
\end{definition}

In what follows we compute the measure $\mu_p(\widetilde E_p)$.
For this, we need to write $\widetilde{E}_p$ as a disjoint union of transformed
copies of $E_p$ as in Lemma~\ref{thm:disjoint_shifted}.
The following lemma is essential for this.

\begin{lemma}\label{thm:affine_equivalence}
	Assume $f(x) \in \vZ_p^{n+1}$ is Eisenstein of degree $n$, and let
	$A = \begin{psmallmatrix} a & b \\ c & d \end{psmallmatrix}
	\in \vZ_p^{2 \times 2}$.
	Then, $f * A$ is Eisenstein of degree $n$ if and only if $p \mid b$, $p \nmid
	a$, $p \nmid d$.
\end{lemma}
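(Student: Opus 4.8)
The plan is to reduce the whole statement to the explicit expansion
\[
	f * A = (cx+d)^n \sum_{i=0}^n \alpha_i \Big(\tfrac{ax+b}{cx+d}\Big)^i
	= \sum_{i=0}^n \alpha_i (ax+b)^i (cx+d)^{n-i},
\]
and to split the Eisenstein property of $f * A$ into two independent pieces: the \emph{mod $p$} part (the leading coefficient is a unit and all lower coefficients lie in $p$) and the \emph{mod $p^2$} part (the constant coefficient avoids $p^2$). The first piece will pin down $a$ and $b$, the second will pin down $d$, and I expect $c$ to play no role at all.

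First I would handle the mod $p$ part. Writing $\bar{\cdot}$ for reduction modulo $p$, the hypothesis that $f$ is Eisenstein gives $\bar f = \bar\alpha_n x^n$ with $\bar\alpha_n \ne 0$. Since the affine action is defined by integer-polynomial formulas in the coefficients, it commutes with reduction modulo $p$, so $\overline{f * A} = \bar f * \bar A = \bar\alpha_n (\bar a x + \bar b)^n$ in $\vF_p[x]$. The first two Eisenstein conditions on $f * A$ say exactly that $\overline{f*A}$ is a nonzero monomial of degree $n$; as $\bar\alpha_n \ne 0$, this is equivalent to $(\bar a x + \bar b)^n$ being a degree-$n$ monomial over $\vF_p$. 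I would then prove the key sub-claim that, over $\vF_p$, the polynomial $(\bar a x + \bar b)^n$ is a nonzero degree-$n$ monomial if and only if $\bar a \ne 0$ and $\bar b = 0$, i.e.\ $p \nmid a$ and $p \mid b$. This monomial characterization is the one delicate point of the argument: the naive approach via binomial coefficients is misleading when $p \mid n$, since many coefficients vanish for spurious reasons. The clean way around this is to inspect the \emph{constant} term, which is $\bar\alpha_n \bar b^n$: for a degree-$n$ monomial it must vanish, forcing $\bar b = 0$ (there are no nilpotents in a field), while $\bar a \ne 0$ is needed for degree $n$; conversely $p\nmid a$, $p\mid b$ clearly give $\bar\alpha_n \bar a^n x^n$.

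It remains to analyze the mod $p^2$ part under the standing assumptions $p \nmid a$ and $p \mid b$. The constant coefficient of $f * A$ is $\beta_0 = \sum_{i=0}^n \alpha_i b^i d^{n-i}$. Using $p \mid b$ one sees that $b^i \in p^2\vZ_p$ for $i \ge 2$, and that the $i=1$ term $\alpha_1 b\, d^{n-1}$ lies in $p^2\vZ_p$ because $\alpha_1 \in p\vZ_p$ as well; hence $\beta_0 \equiv \alpha_0 d^n \pmod{p^2\vZ_p}$. Since $f$ is Eisenstein we have $v_p(\alpha_0) = 1$, so $\beta_0 \notin p^2\vZ_p$ precisely when $d^n$ is a unit, i.e.\ when $p \nmid d$; if instead $p \mid d$ then $\alpha_0 d^n \in p^2\vZ_p$ and the third Eisenstein condition fails. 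Combining the two parts gives that $f * A$ is Eisenstein of degree $n$ exactly when $p \nmid a$, $p \mid b$, and $p \nmid d$, with $c$ unconstrained, as claimed.
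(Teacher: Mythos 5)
Your proof is correct, and it takes a genuinely cleaner route than the paper's, even though the underlying computations overlap. The paper starts from the explicit double-sum expansion
\begin{equation*}
	\beta_l = \sum_{j=0}^{l} \sum_{s=l}^{n} \binom{n + j - s}{j}
	\binom{s-j}{l-j} \alpha_{s-j} d^{n-s} b^{s-l} a^{l-j} c^j,
\end{equation*}
its equation (\ref{eq:affine_coefficients}), and then checks individual coefficients: $\beta_0 \bmod p$ gives $p \mid b$, $\beta_0 \bmod p^2$ gives the condition on $d$, $\beta_n \bmod p$ gives $p \nmid a$, and the divisibility of the middle coefficients $\beta_l$ ($0 < l < n$) in the converse direction is left as ``easily seen'' from the formula. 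You avoid the explicit expansion entirely: since the action is given by polynomial formulas over $\vZ$, it commutes with reduction, so $\overline{f * A} = \bar\alpha_n(\bar a x + \bar b)^n$ in $\vF_p[x]$, and the first two Eisenstein conditions become the question of when a power of a linear form is a degree-$n$ monomial --- which you settle by looking at the constant term $\bar b^n$, correctly sidestepping the Frobenius collapse of binomial coefficients when $p \mid n$. This disposes of all middle coefficients in one stroke, and your mod $p^2$ analysis of $\beta_0$ is identical to the paper's. Incidentally, your reduction-mod-$p$ device is exactly the one the paper itself deploys later, in Lemma~\ref{thm:noninvertible}, so your argument unifies the two lemmata; what the paper's explicit formula buys in exchange is a reusable expression for $\beta_0$ that it invokes again in that same lemma to conclude $p^2 \mid \beta_0$.
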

\begin{proof}
	If we write $f(x) = \sum_{i=0}^{n} \alpha_i x^i$ and $f * A = \sum_{l=0}^{n}
	\beta_l x^l$, then a simple calculation gives
	\begin{equation}\label{eq:affine_coefficients}
		\beta_l = \sum_{j=0}^{l} \sum_{s=l}^{n} \binom{n + j - s}{j}
		\binom{s-j}{l-j} \alpha_{s-j} d^{n-s} b^{s-l} a^{l-j} c^j.
	\end{equation}

	Assume now that $f * A$ is Eisenstein, so $p \mid \beta_l$ for $0 \le l \le
	n-1$, $p^2 \nmid \beta_0$, $p \nmid \beta_n$.
	Consider first $\beta_0$.
	Reducing modulo $p$ and using that $p \mid \alpha_i$ for $i < n$, we see that
	\begin{equation*}
		\beta_0 \equiv \alpha_n b^n \pmod{p}.
	\end{equation*}
	Since $p \nmid \alpha_n$, we get that $p \mid b$.
	Knowing this, we reduce $\beta_0$ modulo $p^2$ and get
	\begin{equation*}
		\beta_0 \equiv \alpha_0 d^n + \alpha_1 d^{n-1} b
		\equiv \alpha_0 d^n \pmod{p^2},
	\end{equation*}
	since $p^2 \mid \alpha_1 b$.
	From this, we see that $p^2 \nmid \beta_0$ if and only if $p \nmid d$.

	Finally, we reduce $\beta_n$ modulo $p$ and get
	\begin{equation*}
		\beta_n \equiv \alpha_n a^n \pmod{p},
	\end{equation*}
	from which we conclude that $p \nmid a$.

	Vice versa, if we assume that $p \mid b$, $p \nmid a$, $p \nmid d$, the same
	computations as above show that $p \nmid \beta_n$, $p \mid \beta_0$, $p^2
	\nmid \beta_0$, and we easily see from (\ref{eq:affine_coefficients}) that $p
	\mid \beta_l$ for $0 < l < n$.
	Hence, $f * A$ is Eisenstein.
\end{proof}

We denote by $S = \{ \begin{psmallmatrix} a & b \\ c & d \end{psmallmatrix}
\in \vZ_p^{2 \times 2} \,:\, p \mid b, p \nmid a, p \nmid d \}$ the set of
matrices from Lemma~\ref{thm:affine_equivalence}.
This is a subgroup of $\GL_2(\vZ_p)$.
We can obtain the disjoint union decomposition of $\widetilde{E}_p$ by
considering the left cosets of $S$, but first, we need to deal with the
noninvertible matrices.
It turns out that they don't matter.

\begin{lemma}\label{thm:noninvertible}
	Let $f(x) \in \vZ_p^{n+1}$.
	If $A = \begin{psmallmatrix} a & b \\ c & d \end{psmallmatrix}
	\in \vZ_p^{2 \times 2}$ is \emph{not} invertible, then $f * A$ is not
	Eisenstein of degree $n$.
\end{lemma}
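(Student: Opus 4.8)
The plan is to reduce everything to a statement modulo $p$ about the leading and constant coefficients of $f * A$, and then to use the sharper non-divisibility condition on the constant coefficient to rule out the one degenerate configuration that the mod-$p$ reduction alone would still permit. Throughout I assume $n \ge 2$, as elsewhere in the paper. I would start from the explicit expansion
\[
	f * A = \sum_{i=0}^{n} \alpha_i\,(ax+b)^i (cx+d)^{n-i},
\]
where $f(x) = \sum_i \alpha_i x^i$. Writing $F(X,Y) = \sum_{i=0}^n \alpha_i X^i Y^{n-i}$ for the degree-$n$ homogenization of $f$, the two coefficients I care about can be read off at once: the leading coefficient is $\beta_n = F(a,c)$ and the constant coefficient is $\beta_0 = (f*A)(0) = F(b,d)$. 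Note that this makes no assumption on the exact degree of $f$, since $F$ is homogeneous of degree $n$ regardless.

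Next I would translate noninvertibility of $A$ into $p \mid \det A = ad - bc$, i.e.\ the reduction $\bar A \in \vF_p^{2\times 2}$ of $A$ modulo $p$ is singular. Arguing by contradiction, suppose $f * A$ is Eisenstein of degree $n$. Then $\beta_n$ is a unit, so $\bar F(\bar a, \bar c) \neq 0$; since $F$ is homogeneous of positive degree, this already forces $(\bar a, \bar c) \neq (0,0)$. As $\bar A$ is singular with nonzero first column, its second column must be a scalar multiple of the first, say $(\bar b, \bar d) = \mu\,(\bar a, \bar c)$ for some $\mu \in \vF_p$. Homogeneity of $F$ then yields
\[
	\bar\beta_0 = \bar F(\bar b, \bar d) = \bar F(\mu \bar a, \mu \bar c)
	= \mu^n\, \bar F(\bar a, \bar c) = \mu^n \bar\beta_n.
\]

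The Eisenstein condition forces $p \mid \beta_0$, hence $\mu^n \bar\beta_n = 0$ in the field $\vF_p$; since $\bar\beta_n \neq 0$ this gives $\mu = 0$, i.e.\ $p \mid b$ and $p \mid d$. This is precisely the degenerate case that the mod-$p$ data cannot exclude on its own, and it is here that the extra precision of Eisenstein's criterion must enter: with $p \mid b$ and $p \mid d$, every monomial $b^i d^{n-i}$ appearing in $\beta_0 = F(b,d)$ is divisible by $p^{i}\cdot p^{\,n-i} = p^n$, so $p^n \mid \beta_0$, and since $n \ge 2$ we conclude $p^2 \mid \beta_0$. This contradicts the requirement $p^2 \nmid \beta_0$ in the definition of an Eisenstein polynomial, so $f * A$ cannot be Eisenstein of degree $n$. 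The main obstacle is exactly this last point: reducing modulo $p$ is not enough, since a singular $A$ can still make $\overline{f * A}$ a nonzero multiple of $x^n$, and the contradiction only materializes once the $p^2$ condition on the constant term is brought into play.
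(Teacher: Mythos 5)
Your proof is correct, and while it shares the paper's overall skeleton (argue by contradiction, reduce $A$ modulo $p$, use singularity to force $p \mid b$ and $p \mid d$, then contradict $p^2 \nmid \beta_0$), the mechanism is genuinely different and tidier. The paper splits into cases according to a row relation in $\bar{A}$ (either $\bar{c} = \bar{d} = 0$, or $\bar{a} = \lambda \bar{c}$, $\bar{b} = \lambda \bar{d}$); in the main case it evaluates $\bar{f} * \bar{A} = (\bar{c} x + \bar{d})^n \bar{f}(\lambda)$, compares with $\bar{\beta}_n x^n$ to conclude $\bar{d} = 0$, and then falls back on the explicit expansion (\ref{eq:affine_coefficients}) to get $p^2 \mid \beta_0$, leaving the other case as ``similar''. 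Your observation that $\beta_n = F(a,c)$ and $\beta_0 = F(b,d)$, with $F$ the degree-$n$ homogenization of $f$, removes the case distinction entirely: $\bar{\beta}_n \neq 0$ forces the first \emph{column} of $\bar{A}$ to be nonzero, singularity then makes the second column a multiple $\mu$ of the first, and homogeneity yields $\bar{\beta}_0 = \mu^n \bar{\beta}_n$ in one line, so $\mu = 0$ and $p^n \mid \beta_0$. What your route buys is a single symmetric argument that only ever needs the two extreme coefficients and never touches the general coefficient formula; what the paper's route buys is reuse of the machinery (\ref{eq:affine_coefficients}) already established for Lemma~\ref{thm:affine_equivalence}. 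One caveat shared by both proofs: the final step ($p \mid b, d$ implies $p^2 \mid \beta_0$) needs $n \ge 2$, which you state explicitly while the paper leaves it implicit; for $n = 1$ the statement is in fact false, since $f(x) = x$ and the noninvertible matrix $A = \begin{psmallmatrix} 1 & p \\ 0 & p \end{psmallmatrix}$ give $f * A = x + p$, which is Eisenstein.
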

\begin{proof}
	Assume for contradiction that $f * A$ is Eisenstein of degree $n$.
	We write again $f(x) = \sum_{i=0}^{n} \alpha_i x^i$ and $f * A =
	\sum_{l=0}^{n} \beta_l x^l$.
	We reduce modulo $p$:
	\begin{align*}
		\bar{A} = \begin{pmatrix}
			\bar{a} & \bar{b} \\ \bar{c} & \bar{d}
		\end{pmatrix} \in \vF_p^{2 \times 2}.
	\end{align*}
	Since $\det \bar{A} = 0$, there are two cases:
	Either $\bar{c} = \bar{d} = 0$, or there is a $\lambda \in \vF_p$ such that
	$\bar{a} = \lambda \bar{c}$ and $\bar{b} = \lambda \bar{d}$.

	We consider the second case.
	Since $f * A$ is Eisenstein, we see that $\bar{f} * \bar{A} = \bar \beta_n x^n
	\in \vF_p[x]$ with $\bar{\beta}_n \ne 0$.
	On the other hand,
	\begin{equation*}
		\bar{f} * \bar{A} = (\bar{c} x + \bar{d})^n \bar{f}
		\bigg(
			\frac{\lambda \bar{c} x + \lambda \bar{d}}{\bar{c} x + \bar{d}}
		\bigg)
		= (\bar{c} x + \bar{d})^n \bar{f}(\lambda).
	\end{equation*}
	From this, we see that $\bar{f}(\lambda) \ne 0$, $\bar{c} \ne 0$ and
	$\bar{d} = 0$.
	This means that $p \mid d$ and $p \mid b$, from which it follows by
	(\ref{eq:affine_coefficients}) that $p^2 \mid \beta_0$.
	This contradicts the assumption that $f * A$ is Eisenstein.

	The case $\bar{c} = \bar{d} = 0$ is similar.
\end{proof}

Hence, we only need to consider the action of $\GL_2(R)$ on $R^{n+1}$.
According to Lemma~\ref{thm:affine_equivalence}, the action of elements of $S$
does not change whether a polynomial is Eisenstein.
Therefore, to see if a polynomial $f(x) \in \vZ_p^{n+1}$ is affine Eisenstein,
it is enough to check one representative of each left coset of $S \subset
\GL_2(\vZ_p)$.
We can list these cosets explicitly.

\begin{lemma}\label{thm:equivalence_classes}
	The subgroup $S \subset \GL_2(\vZ_p)$ has $p + 1$ left cosets, which are the
	following:
	\begin{itemize}
		\item $\begin{pmatrix} 1 & i \\ 0 & 1 \end{pmatrix}S$ for
			$i \in \{0, \ldots, p-1\}$ (corresponding to shifts), and
		\item $\begin{pmatrix} 0 & 1 \\ 1 & 0 \end{pmatrix}S$ (corresponding to
			the reciprocal).
	\end{itemize}
\end{lemma}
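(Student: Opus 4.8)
The plan is to pass to the residue field and recognize $S$ as the preimage of a Borel subgroup. Reduction modulo $p$ gives a surjective group homomorphism $\pi\colon \GL_2(\vZ_p) \to \GL_2(\vF_p)$, under which $S$ maps onto the subgroup $\bar S$ of lower-triangular matrices $\begin{psmallmatrix} \bar a & 0 \\ \bar c & \bar d\end{psmallmatrix}$ with $\bar a, \bar d \in \vF_p^*$. First I would check that $S$ is in fact the \emph{full} preimage $\pi^{-1}(\bar S)$: the conditions $p \mid b$, $p \nmid a$, $p \nmid d$ are exactly $\bar b = 0$, $\bar a \ne 0$, $\bar d \ne 0$, and any matrix satisfying them has $\det = ad - bc \equiv ad \not\equiv 0 \pmod p$, hence lies in $\GL_2(\vZ_p)$ automatically. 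Thus no invertibility condition is lost and $S = \pi^{-1}(\bar S)$; in particular $\ker \pi \subseteq S$.

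Because $\pi$ is surjective with $\ker\pi \subseteq S = \pi^{-1}(\bar S)$, the correspondence theorem gives a bijection $gS \mapsto \pi(g)\bar S$ between the left cosets of $S$ in $\GL_2(\vZ_p)$ and those of $\bar S$ in $\GL_2(\vF_p)$. It therefore suffices to work over $\vF_p$. Counting orders, $\abs{\GL_2(\vF_p)} = p(p-1)^2(p+1)$ while $\abs{\bar S} = p(p-1)^2$ (namely $\bar a, \bar d \in \vF_p^*$ and $\bar c \in \vF_p$ free), so the index is $p+1$. Hence $S$ has exactly $p+1$ left cosets.

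It then remains to show the $p+1$ matrices in the statement lie in pairwise distinct cosets; since there are exactly $p+1$ cosets, this forces them to be a complete set of representatives. Two elements $g, h$ satisfy $gS = hS$ iff $g^{-1}h \in S$, so I would simply compute these products. For $U_i = \begin{psmallmatrix} 1 & i \\ 0 & 1\end{psmallmatrix}$ and $U_j$ with $i \ne j$ in $\{0,\dots,p-1\}$, one gets $U_i^{-1}U_j = \begin{psmallmatrix} 1 & j - i \\ 0 & 1\end{psmallmatrix}$, which lies in $S$ only if $p \mid (j-i)$, impossible for distinct residues. For $U_i$ against the reciprocal $W = \begin{psmallmatrix} 0 & 1 \\ 1 & 0\end{psmallmatrix}$, the product $U_i^{-1}W = \begin{psmallmatrix} -i & 1 \\ 1 & 0\end{psmallmatrix}$ has lower-right entry $0$, violating $p \nmid d$, so it is not in $S$. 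This establishes pairwise distinctness.

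Conceptually, the cleanest way to see why these are precisely the cosets is that $\GL_2(\vF_p)/\bar S \cong \mathbb{P}^1(\vF_p)$: since $\bar S$ stabilizes the line $\langle e_2\rangle$, a coset is determined by the line $g\langle e_2\rangle$. The shifts $U_i$ send this to the $p$ distinct lines $\langle (i,1)^{\mathsf{T}}\rangle$, and the reciprocal sends it to the remaining line $\langle(1,0)^{\mathsf{T}}\rangle$, together exhausting the $p+1$ points. The only step requiring genuine care is the preimage identification in the first paragraph — verifying that the determinant condition is automatic, so that $S$ captures exactly the lower-triangular Borel and no invertible matrix escapes the count; the coset bookkeeping, and in particular seeing the reciprocal as the single ``extra'' coset beyond the shifts, is then routine.
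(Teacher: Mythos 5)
Your proof is correct, but it takes a genuinely different route from the paper's. The paper argues constructively: given an arbitrary $A = \begin{psmallmatrix} s & t \\ u & v \end{psmallmatrix} \in \GL_2(\vZ_p)$, it splits into the cases $p \mid v$ and $p \nmid v$ and writes down an explicit factorization of $A$ as one of the listed representatives times an element of $S$ (invertibility of $A$ forcing the two unit conditions on that factor); the distinctness of the $p+1$ cosets is the part it declares easy. You do the reverse: you prove distinctness by computing $g^{-1}h$ for each pair of representatives, and you obtain exhaustiveness from a counting argument, since $S = \pi^{-1}(\bar S)$ for the reduction map $\pi\colon \GL_2(\vZ_p) \to \GL_2(\vF_p)$ and the lower-triangular Borel $\bar S$, so the left cosets of $S$ are in bijection with those of $\bar S$, of which there are exactly $[\GL_2(\vF_p):\bar S] = p(p-1)^2(p+1)/\bigl(p(p-1)^2\bigr) = p+1$. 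Your observation that the determinant condition is automatic for matrices satisfying $p \mid b$, $p \nmid a$, $p \nmid d$ is indeed the crux of the preimage identification (and it is also the reason the paper's $S$, defined inside $\vZ_p^{2\times 2}$, lies in $\GL_2(\vZ_p)$ at all). What your route buys is conceptual insight and generality: the coset space is $\mathbb{P}^1(\vF_p)$, which explains both the number $p+1$ and the shape of the representatives ($p$ shifts plus the reciprocal accounting for the point at infinity), and the argument transfers verbatim to $\GL_2$ over other local rings. What the paper's route buys is self-containedness and explicitness: it needs no order formula for $\GL_2(\vF_p)$ and no correspondence theorem, and it tells you concretely, for any given matrix, which coset it belongs to.
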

\begin{proof}
	It is easy to see that these $p+1$ left cosets are distinct.
	We need to show that every
	$A = \begin{psmallmatrix} s & t \\ u & v \end{psmallmatrix}\in \GL_2(\vZ_p)$
	lies in one of them.

	Consider first the case $p \mid v$.
	Then,
	\begin{equation*}
		\begin{pmatrix}
			s & t \\ u & v
		\end{pmatrix}
		=
		\begin{pmatrix}
			0 & 1 \\ 1 & 0
		\end{pmatrix}
		\begin{pmatrix}
			u & v \\ s & t
		\end{pmatrix},
	\end{equation*}
	with $\begin{psmallmatrix} u & v \\ s & t \end{psmallmatrix} \in S$.

	If instead $p \nmid v$, let $i \equiv t/v \pmod{p}$,
	$i \in \{0, \ldots, p-1\}$.
	Then,
	\begin{equation*}
		\begin{pmatrix}
			s & t \\ u & v
		\end{pmatrix}
		=
		\begin{pmatrix}
			1 & i \\ 0 & 1
		\end{pmatrix}
		\begin{pmatrix}
			s - iu & t - iv \\ u & v
		\end{pmatrix},
	\end{equation*}
	with $p \mid t - iv$ by choice of $i$, and $p \nmid s - iu$ since the matrix
	has to be invertible.
\end{proof}

Together, Lemmata~\ref{thm:affine_equivalence} and~\ref{thm:equivalence_classes}
say that $f(x)$ is affine Eisenstein with respect to some $A$ if and only if it
is shifted Eisenstein with respect to some $i \in \{0, \ldots, p-1\}$, or if its
reciprocal $x^n f(1/x)$ is Eisenstein; and these possibilities are exclusive.
In other words,
\begin{equation*}
	\widetilde E_p = \operatorname{recip}(E_p) \sqcup \bigsqcup_{i = 0}^{p-1}
	\sigma^{-i} E_p.
\end{equation*}
Since shifting and taking the reciprocal are linear maps with determinant $\pm
1$, they preserve the $p$-adic measure, and we see that
\begin{equation*}
	\mu_p(\widetilde E_p) = (p+1) \mu_p(E_p) = \frac{(p+1)(p-1)^2}{p^{n+2}}.
\end{equation*}
With this, we can now show the analogue of Theorem~\ref{thm:shifted3} for affine
transformations.

\begin{theorem}\label{thm:affine3}
	Let $n \ge 3$.
	The density of affine Eisenstein polynomials of degree $n$ is
	\begin{equation*}
		\rho(\widetilde E) = 1 - \smashoperator{\prod_{p \text{ prime}}} \;
		\left(1 - \frac{(p+1)(p-1)^2}{p^{n+2}}\right).
	\end{equation*}
\end{theorem}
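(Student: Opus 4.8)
The plan is to reuse the local-to-global machinery exactly as in the proof of Theorem~\ref{thm:shifted3}, now with the local data furnished by the computation $\mu_p(\widetilde E_p) = (p+1)(p-1)^2/p^{n+2}$ carried out above. Concretely, I would set $U_\infty = \emptyset$ and $U_p = \widetilde E_p$ for every finite prime $p$, so that $s_p = \mu_p(\widetilde E_p)$. Since each $\widetilde E_p$ is a finite union of translates of the clopen set $E_p$ under the measure-preserving maps $\sigma^{-i}$ and $\operatorname{recip}$, it is itself clopen, so $\mu_p(\partial U_p) = 0$ holds automatically, and $U_\infty = \emptyset$ trivially satisfies the hypotheses on the archimedean factor. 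As in the shifted case, $P^{-1}(\{\emptyset\})$ is the complement of $\widetilde E$, so once condition~(\ref{eq:conditiondens}) is verified, part~(3) of Lemma~\ref{thm:bjornstoll} gives
\begin{equation*}
	\rho(\widetilde E) = 1 - \smashoperator{\prod_{p \text{ prime}}}
	\left(1 - s_p\right)
	= 1 - \smashoperator{\prod_{p \text{ prime}}}
	\left(1 - \frac{(p+1)(p-1)^2}{p^{n+2}}\right),
\end{equation*}
which is the claim.

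The only real work, therefore, is to re-establish condition~(\ref{eq:conditiondens}) for the larger sets $\widetilde E_p$, and this is where I expect the main obstacle to lie. The discriminant argument bounding the relevant primes by $p \le CB^2$ transfers without change, since an affine transformation by an invertible matrix multiplies the discriminant by a unit (up to the determinant raised to a power), and by Lemma~\ref{thm:noninvertible} only invertible matrices can produce an Eisenstein image; hence $\disc(f)$ is still a nonzero integer divisible by $p^{n-1}$, and the union in the analogue of~(\ref{eq:fundamentallimit}) remains finite for fixed $B$. What must be redone is the two-case counting bound on $\abs{\widetilde E_p \cap [-B,B[^{n+1}}$. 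Using Lemmata~\ref{thm:affine_equivalence} and~\ref{thm:equivalence_classes}, modulo $p$ every element of $\widetilde E_p$ reduces either to $a(x-i)^n$ for some shift $i$ (the shifted part, already handled) or to the reduction of a polynomial whose reciprocal is $a'x^n$, i.e.\ to a scalar multiple of $(x-i)^n$ or of $1$ lifted through the reciprocal; in both cases the reduction lives in a set governed only by the top two coefficients.

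Accordingly, in the case $2B < p$ I would enlarge the set $S_p$ of the shifted proof to include the $p$ reciprocal images, obtaining a set of size at most $2(2B-1)\,2B \le 2(2B)^2$, still uniform in $p$; and in the case $2B \ge p$ I would replace $\overline L_p$ by the union of $\overline L_p$ with its reciprocal, which has $\abs{\cdot} \le 2p^2$, so that the projection-fibre bound gives $\abs{\widetilde E_p \cap [-B,B[^{n+1}} \le 2^{n+2}B^{n+1}/p^{n-1}$. These bounds differ from the shifted case only by the harmless factor $2$, so splitting the sum in~(\ref{eq:equationsum}) exactly as in~(\ref{eq:estimate_sum}) yields
\begin{equation*}
	\smashoperator[r]{\sum_{CB^2 > p > M}}
	\frac{\abs[\big]{\widetilde E_p \cap [-B,B[^{n+1}}}{(2B)^{n+1}}
	\le \sum_{CB^2 > p > 2B} \frac{2(2B)^2}{(2B)^{n+1}}
	+ \sum_{2B \ge p > M} \frac{2^{n+2}}{p^{n-1}},
\end{equation*}
and for $n \ge 3$ the first sum vanishes in the limit over $B$ by the prime number theorem while the second is the tail of a convergent series and vanishes as $M \to \infty$. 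This confirms condition~(\ref{eq:conditiondens}) and completes the proof.
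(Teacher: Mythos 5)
Your proposal is correct and takes essentially the same approach as the paper: the same application of Lemma~\ref{thm:bjornstoll} with $U_p = \widetilde E_p$ and $U_\infty = \emptyset$, the same discriminant argument bounding the relevant primes by $p \le CB^2$, and the same two-case counting to verify condition~(\ref{eq:conditiondens}). The only (harmless) difference is that you over-count the reciprocal contribution by a factor of $2$ in both cases, whereas the paper observes that for $p > 2B$ the reciprocal of a height-$B$ polynomial can never be $p$-Eisenstein, so that case is literally unchanged, and for $2B \ge p$ it records the sharper bound $\abs{\overline L_p} \le p(p+1)$; neither refinement is needed for convergence.
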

\begin{proof}
	The proof is mostly the same as for Theorem~\ref{thm:shifted3}.
	For the verification of condition (\ref{eq:conditiondens}), note that the case
	$2B < p$ is unchanged from the proof of Theorem~\ref{thm:shifted3}, since the
	reciprocal polynomial cannot be $p$-Eisenstein for $p > B$.
	For the case $2B \ge p$, we simply get an additional term in the union
	(\ref{eq:Lp_union}), and so the estimate changes to $\abs{\overline L_p} \le
	p(p+1)$.
	However, this doesn't affect the convergence of the sum in
	(\ref{eq:estimate_sum}).
\end{proof}

\begin{remark}
	Clearly, the density of affine Eisenstein polynomials of degree $n = 2$ is
	one.
	After all, we are considering a superset of the shifted Eisenstein polynomials
	of Proposition~\ref{thm:shifted2}.
\end{remark}

\section{Monte Carlo Simulations}
\label{sec:simulations}

As in~\cite[Section~6]{bib:heyman2014shifted}, we ran some Monte Carlo
simulations to verify how near our results are to the actual probability of
finding a shifted (or affine) Eisenstein polynomial among all the polynomials of
a given height.
For degrees $n=3$ and $4$, we tested $20\,000$ random polynomials of height at
most $1\,000\,000$.
The results are shown in Tables~\ref{tab:simulation3} and~\ref{tab:simulation4}.
The first column contains the number of polynomials which were actually found by
the Monte Carlo experiment, while the second column contains the expected number
given by~\cite[Theorem~2]{bib:shparlinskiEisen} and Theorems~\ref{thm:shifted3}
and~\ref{thm:affine3}.
All the experiments seem to agree with our theoretical results.

The simulations were done using the Sage computer algebra
system~\cite{bib:sage}, and the code is available upon request.

\begin{table}[htp]
	\centering
	\caption{Simulations for degree $n = 3$.}
	\label{tab:simulation3}
	\begin{tabular}{|l|r|r|}
		\hline
		{}                   &     found &   expected \\
		\hline
		irreducible          & $20\,000$ &  $20\,000$ \\
		Eisenstein           &    $1112$ &     $1112$ \\
		shifted Eisenstein   &    $3416$ &     $3353$ \\
		affine Eisenstein    &    $4360$ &     $4328$ \\
		\hline
	\end{tabular}
\end{table}

\begin{table}[htp]
	\centering
	\caption{Simulations for degree $n = 4$.}
	\label{tab:simulation4}
	\begin{tabular}{|l|r|r|}
		\hline
		{}                   &     found &   expected \\
		\hline
		irreducible          & $20\,000$ &  $20\,000$ \\
		Eisenstein           &     $432$ &      $449$ \\
		shifted Eisenstein   &    $1096$ &     $1112$ \\
		affine Eisenstein    &    $1570$ &     $1547$ \\
		\hline
	\end{tabular}
\end{table}

\section*{Acknowledgements}

The authors were supported in part by Swiss National Science Foundation grant number 149716 and \emph{Armasuisse}.

\bibliographystyle{plainnat}
\bibliography{biblio}{}

\end{document}